
\documentclass[letterpaper, 10 pt, conference]{ieeeconf}  

\IEEEoverridecommandlockouts                              
\overrideIEEEmargins
\pdfminorversion=4

%
\usepackage{subcaption}
\usepackage[utf8]{inputenc}
\usepackage[fleqn]{amsmath}
\usepackage{dsfont} 
\usepackage{tabularx,booktabs}
\usepackage{units}
\usepackage{bropd} 
\usepackage{bm}    
\usepackage{currfile}
\usepackage{tikz}
\usetikzlibrary{arrows} 
\usetikzlibrary{calc} 
\usetikzlibrary{positioning}
\usepackage{pgfplots}
\pgfplotsset{compat=1.13}
\pgfplotsset{table/search path={figures//}} 
\usepgfplotslibrary{external}
\tikzexternalize[prefix=figures/]
\newlength\figureheight
\newlength\figurewidth
\setlength{\figurewidth}{\linewidth}
\setlength{\figureheight}{0.5\textwidth}
\newlength\leftlabeldist
\setlength{\leftlabeldist}{5mm}
\newlength{\bottomlabeldist}
\setlength{\bottomlabeldist}{3mm}
\usepackage{myabbreviations}

\newtheorem{proposition}{Proposition}
\newtheorem{assumption}{Assumption}
\newtheorem{lemma}{Lemma}

\usepackage[absolute,showboxes]{textpos}

\setlength{\TPHorizModule}{\paperwidth}\setlength{\TPVertModule}{1.01\paperheight}
\TPMargin{5pt}

\newcommand{\copyrightstatement}{
    \begin{textblock}{0.87}(0.06,0.93)    
        \noindent
        \textcopyright 2021 IEEE.  Personal use of this material is permitted.  Permission from IEEE must be obtained for all other uses, in any current or future media, including reprinting/republishing this material for advertising or promotional purposes, creating new collective works, for resale or redistribution to servers or lists, or reuse of any copyrighted component of this work in other works.
    \end{textblock}
}

\title{\LARGE \bf
    \vspace*{-0.5pt}{Efficient Formulation of Collision Avoidance Constraints in Optimization Based Trajectory Planning and Control}
}

\author{Max Lutz and Thomas Meurer
    \thanks{Max Lutz and Thomas Meurer are with the Chair of Automatic Control, Faculty of Engineering, Kiel University, 24143 Kiel, Germany 
    \mbox{\tt\small \{mlut, tm\}@tf.uni-kiel.de}}%
}

\begin{document}

\maketitle
\copyrightstatement
\thispagestyle{empty}
\pagestyle{empty}

\begin{abstract}
    To be applicable to real world scenarios trajectory planning schemes for mobile autonomous systems must be able to efficiently deal with obstacles in the area of operation.
    In the context of optimization based trajectory planning and control a number of different approaches to formulate collision avoidance constraints can be found in the literature.
    Here the contribution of the present work is twofold.
    First, the most popular methods to represent obstacles are summarized, namely the simple ellipsoidal representation, the constructive solid geometry (CSG) method as well as a direct and an indirect implementation of a signed distance based approach.
    The formulations are characterized with respect to the impact on the complexity of the optimization problem, as well as the ability to meet different problem requirements.
    Second, this work presents a novel variant of the CSG method to describe collision avoidance constraints.
    It is highly efficient due to a very low number of nonlinear inequality constraints required for a given number of obstacles and sample points and in contrast to the original CSG formulation allows to consider the controlled system's shape.
    The good performance of the proposed formulation is demonstrated by a comparison to the previously mentioned alternatives.
    To this end optimal trajectory planning for marine surface vessels formulated as a nonlinear programming problem is used as a benchmark example where the scenario is designed based on the maritime test field in Kiel, Germany. 
\end{abstract}
%
%
%
\section{Introduction}
Obstacle avoidance is an important element of optimization based trajectory planning for all autonomous systems that are not track bound such as vehicles, vessels, aircraft, robots and alike.
Solution techniques that are based on the formulation of an optimal control problem (OCP), subsequent discretization and iterative solution using a nonlinear programming problem (NLP) solver require an explicit mathematical model of the obstacles, i.e., occupied or forbidden subsets of the controlled system's operating space.
A natural choice due to its simplicity is the use of ellipsoids or even spheres, see, e.g., \cite{Chen1988,Geisert2016,Bitar2018}.
There are several important drawbacks associated with this simple modeling technique when realistic scenarios are considered, as discussed, e.g., in \cite{Zhang2020} for the task of parallel parking a car, or for the motion planning for a humanoid robot in \cite{Schulman2014}.
It is thus not surprising that the interest in numerically efficient representations of objects in 2D and 3D goes back to the early research on numerical implementations of optimal control problems, e.g., aiming to solve optimal trajectory planning tasks for robots \cite{Gilbert1984}.
In the quest for an efficient mathematical formulation numerous different methods were developed.
Some, such as the Gilbert-Johnson-Keerthi (GJK) algorithm \cite{Gilbert1988}, a simplex based iterative approach to calculate the distance between two convex sets, have become highly popular in other applications, namely computer graphics and physics simulators (physics engines).
On the other hand a constructive solid geometry (CSG) method with smooth approximation of the intersection and union operations has originally been developed for computer graphics applications in \cite{Ricci1973} but allows to represent obstacles in the OCP context in a very compact way.
Super ellipsoids, i.e., sets described by different p-norms, as used in the optimal trajectory planning for vehicles by, e.g., \cite{Gong2009} and \cite{Febbo2017} are essentially a subset of the shapes attainable by this CSG method.
Examples for the application of this CSG method itself include the formulation of constraint equations for the trajectory planning for underwater robots in \cite{Wang2000} which also discusses the drawbacks of using a potential field for obstacle avoidance as proposed by, e.g., \cite{Khatib1985,Romon1990} in combination with CSG.
In a more recent contribution in the context of autonomous docking of marine surface vessels harbor structures are avoided using CSG based collision avoidance constraints in \cite{Helling2020}.
An interesting discussion on the connections between different approaches to CSG and some applications in various fields are given by \cite{Shapiro1994} and the references therein.
An example for the use of the GJK algorithm is given by the before mentioned humanoid robot motion planning \cite{Schulman2014}, while the collision avoidance constraints for the car parking problem \cite{Zhang2020} are formulated using dual variables.
Due to the vast number of different approaches that have been proposed this short summary is by no means complete and only focuses on some popular examples chosen to serve as a comparison for the novel formulation introduced in this paper.
It is organized as follows.
First the general OCP formulation is outlined in Section~\ref{sec:probFormulation}, following with a recapitulation of popular obstacle modeling approaches in Section~\ref{sec:obstclForm}.
Then in Section \ref{sec:csg_new} the novel CSG based formulation of collision avoidance constraints is introduced.
The benchmark scenario is introduced in Section \ref{sec:results} and simulation results are given.
The paper concludes with some summarizing remarks.
\section{Problem Formulation}
\label{sec:probFormulation}
Often the infinite dimensional OCP expressing the trajectory planning task for a dynamic system is discretized by, e.g., a direct method to yield a NLP of the form
\begin{subequations}
    \label{eq:nlpGeneral}
    \begin{alignat}{2}
        \label{eq:nlp_J}
        \min_{\vec \xi}\; &c\left(\vec \xi\right) = \sum_{k=0}^{N}l_k\left(\vec \xi\right)\hspace{-2ex}\\
        \label{eq:nlp_eqConstr}
        \text{s.t.} \quad &\vec{g}\left(\vec \xi \right)=\vec{0}\,, \\
        \label{eq:nlp_inEqConstr}
        &\vec{h}\left(\vec \xi \right)\leq\vec{0}\,,\,
    \end{alignat}
\end{subequations}
using $\nSamplePts$ equally spaced sample points in time.
The cost function \eqref{eq:nlp_J} with the running costs $l_k(\vec \xi)$, the equality constraints \eqref{eq:nlp_eqConstr} and the inequality constraints \eqref{eq:nlp_inEqConstr} are written as functions of the vector of decision variables $\vec \xi\in\R^\nDecVar$.
In this context obstacle constraints in the operating space $\R^n$ are formulated as inequalities in terms of the decision variables and included in \eqref{eq:nlp_inEqConstr}.
To allow for an efficient solution of the  NLP with state of the art solvers based on iterative techniques, such as sequential quadratic programming or interior point methods, availability of an analytical expression for the gradient of the cost function as well as the nonlinear equality and inequality constraints w.r.t. the decision variables $\vec \xi$ is important.

Due to a lack of space subsequently only static obstacles are considered to facilitate the notation and also the illustration of simulation results.
All approaches presented can easily be extended to accommodate for dynamic obstacle positions by introducing a time dependency in the obstacle modeling equations, the increase in computational effort of this can be expected to be similar for all methods.
Other issues that may arise with dynamic obstacles such as, e.g. the question of recursive feasibility when repeatedly solving the optimization problem in the context of model predictive control, are generally not linked to the choice of obstacle modeling approach and thus also do not affect the latter.

In the following the controlled object's position is denoted by the point $\pX \in \R^n$ if collision avoidance is enforced for a reference point only, where the dependency on the decision variables is stated for clarity.
If the system's shape is considered the occupied area is denoted by $\Vx \subset \R^n$.
\section{Existing Obstacle Formulations}
\label{sec:obstclForm}
\subsection{Spherical and Ellipsoidal Obstacles}
Modeling obstacles as regions with ellipsoidal or as a special case spherical shape is attractive due to the straightforward mathematical representation of the obstacle region
\begin{align}
    \label{eq:ellipsoid}
        \Obstcl=\left\{\vec p\in\R^n|(\vec p - \pO)\T A (\vec p - \pO) \leq 1\right\}\,,
\end{align}
where $\pO \in \R^n$ denotes the obstacle center and $A$ is a positive definite, symmetric matrix defining the shape.
For a number of $\nObstcls$ obstacle regions a total number of $\nObstcls\nSamplePts$ nonlinear inequalities have to be added to the OCP, the calculation of the corresponding gradient is straightforward.
Clearly the assumption that all obstacles or rather all convex sub-regions can be approximated well by an ellipsoid may not hold in a realistic scenario.
Further complications arise if a uniform safety distance around the obstacle is to be respected.
While it is easy to construct concentric spheres, a surface with a uniform distance to an ellipsoid is not an ellipsoid itself \cite{FAROUKI1997}.
Also, if the controlled object's shape is to be taken into account it is not trivial to efficiently calculate a distance between two ellipsoids (unless they are spherical), even in the 2D case.
\subsection{Constructive Solid Geometry}
Some of the shortcomings of the spherical/ellipsoidal obstacle representation can be overcome by making the step to the CSG method proposed by \cite{Ricci1973}.
Central to the method is the non-negative function $f_{\txt{o}}$ called the (characteristic) defining function, a type of real-valued implicit point membership classification (PMC) function \cite{Shapiro1994}. It is used to express geometric shapes in space, i.e., \mbox{$f_{\txt{o}}:\vec p \in \R^n \mapsto \R^+$}, where $f_{\txt{o}} = 1$ defines the boundary, $f_{\txt{o}} < 1$ the interior and $f_{\txt{o}} > 1$ the exterior.
Arbitrarily shaped convex or non-convex regions may be constructed by combining geometric primitives with the intersection and union operations.
In \cite{Ricci1973} the smooth approximations
\begin{subequations}
    \begin{align}
                \label{eq:approxIntersCSG}
                I_p\left(f_{\txt{o},\,1},\dots,f_{\txt{o},\,\nFaces}\right) &= \big(f_{\txt{o},\,1}^p+\dots+f_{\txt{o},\,\nFaces}^p\big)^{\frac{1}{p}}\\
                \intertext{of the intersection and }
        \label{eq:approxUnionCSG}
        U_p\left(f_{\txt{o},\,1},\dots,f_{\txt{o},\,\nFaces}\right) &= \big(f_{\txt{o},\,1}^{-p}+\dots+f_{\txt{o},\,\nFaces}^{-p}\big)^{-\frac{1}{p}}
    \end{align}    
\end{subequations}
of the union operation with $p\in\N$ are proposed.
Here the quality of the approximation increases with $p\to\infty$, for details refer to \cite{Ricci1973}.
Equation \eqref{eq:ellipsoid} gives an example for the defining function of an ellipsoid, if $A$ is allowed to be indefinite it can also express elliptic cylinders and slabs (regions bound by two parallel hyperplanes).

In addition to the improved flexibility w.r.t. the possible obstacle shapes the concept also allows to drastically reduce the number of nonlinear inequality constraints, as irrespective of the number of individual obstacles the application of the approximated union operation allows to construct $f_{\txt{o},\,\Sigma}(\vec \xi) = U_p(f_{\txt{o},\,1},\dots,f_{\txt{o},\,\nObstcls})$, i.e., a single function representing the union over all obstacle regions.
Consequently only a number of $\nSamplePts$ constraints of the form $1-f_{\txt{o},\,\Sigma}(x_k)\leq 0$ for $k = 0,\dots,N$ need to be added to the OCP if the CSG method is used.
While the complexity of the constraint equations increases in comparison to simple ellipsoidal shapes, the analytic gradient can still be easily computed by successive application of the chain rule.
However, the defining function as proposed by \cite{Ricci1973} does not yield a distance measure that can directly be mapped to the euclidean distance so that the introduction of a uniform safety distance remains complicated.
A further drawback of this method is that it is difficult to efficiently consider the controlled object's shape.
A solution by sampling its boundary is given in, e.g., \cite{Helling2020}.
\subsection{Dual Signed Distance Obstacle Representation}
The signed distance function $\sd(\Vx,\Obstcl)$ generalizes the distance between two sets $\Vx, \Obstcl\in\R^n$ by combining the euclidean distance 
\begin{subequations}
    \label{eq:dist+pen}
\begin{align}
    \label{eq:dist}
    \dist(\Vx,\Obstcl) = \!\inf_{\vec p\in\R^n}\{\lVert \vec p \rVert\!:\!(\mathcal{V}(\vec \xi) + \vec p) \cap \Obstcl \neq \emptyset\}   
\end{align}
and the penetration     
\begin{align}
    \label{eq:pen}
    \pen(\Vx,\Obstcl) = \!\inf_{\vec p\in\R^n}\{\lVert\vec p\rVert\!:\!(\mathcal{V}(\vec \xi)+ \vec p)\cap \Obstcl = \emptyset\}
\end{align}
\end{subequations}
to 
\begin{align}
    \label{eq:sd}
    \sd(\Vx,\Obstcl) = \dist(\Vx,\Obstcl) - \pen(\Vx,\Obstcl)\,.
\end{align}
Simply speaking the signed distance extends the notion of distance to the case where objects penetrate, in which case it is equal to the negative euclidean distance either object needs to be moved for them to be out of contact.
To calculate the signed distance \eqref{eq:sd} generally the optimization problems \eqref{eq:dist} and \eqref{eq:pen} have to be solved, as such there is no explicit representation of the signed distance in terms of the decision variables $\vec \xi$.
For compact convex sets however, strong duality holds and an implementation using dual variables is proposed by \cite{Zhang2020} in the context of optimal control.
To simplify notation in the following shapes are limited to convex polytopes, a limitation that results in a negligible restriction regarding the ability to express various practically relevant obstacle shapes.

A polyhedral obstacle $\Obstcl$ with $\nFaces$ faces in $\R^n$ can elegantly be described by 
\begin{align}
    \label{eq:obstclPolygon}
    \Obstcl = \left\{\vec p\in \R^n | A\vec p - \vec b\leq \vec 0\right\}\,,
\end{align}
using the face unit normal vectors $\vec n_i$, $i=1,\dots,\nFaces$ to build the matrix $A=[\vec n_1,\dots,\vec n_\nFaces]\T\in\R^{\nFaces\times n}$ and arbitrary points $\vec o_i$, $i =1,\dots,\nFaces$ on the hyperplanes containing the faces to form the vector $\vec b = [\vec n_1\T \vec o_1,\dots,\vec n_K\T\vec o_\nFaces]\T\in\R^\nFaces$.
As the formulation is based on the signed distance it is straightforward to introduce a minimum distance $d_{\txt{min}}$ to each obstacle.
For the point system case with position $\pX$ the collision-avoidance constraints may then be expressed as
\begin{align}
    \begin{split}
        \sd(\{\pX\},\Obstcl) \geq d_{\txt{min}} \;\Leftrightarrow\; \exists \vec \mu&\geq\vec{0}:\\
        \vec \mu\T\left(A\pX - \vec{b} \right) &\geq d_{\txt{min}}\,,\\
        \lVert A\T\vec \mu\rVert &= 1
    \end{split}
\end{align}
using the dual variables $\vec \mu\in \R^\nFaces$. 
If the shape of the controlled system is to be considered, additional dual variables $\vec \lambda\in\R^\nFacesSys$ have to be used, where $\nFacesSys$ is the number of faces of the polyhedron representing the controlled system.
The matrix with the corresponding face unit normal vectors is denoted by $V(\vec \xi)=[\vec n_1(\vec \xi),\dots,\vec n_\nFacesSys(\vec \xi)]\T\in\R^{\nFacesSys\times n}$.
The components of the vector $\vec q(\vec \xi) = [\vec n_1\T(\vec \xi) \vec v_1(\vec \xi),\dots,\vec n_\nFacesSys\T(\vec \xi)\vec v_\nFacesSys(\vec \xi)]\T\in\R^\nFacesSys$ are given by the inner product of the face unit normal vector and a point $\vec v_k(\vec \xi)$, $k =1,\dots,\nFacesSys$ that lies on the hyperplane containing the corresponding face.
The collision avoidance constraints then read
\begin{align}
    \begin{split}
        \sd(\Vx,\Obstcl) \geq d_{\txt{min}} \;\Leftrightarrow \;\exists \vec \mu&\geq\vec{0}\,,\; \vec \lambda\geq\vec{0}:\\
        -\vec{\lambda}\T\vec q(\vec \xi)-\vec \mu\T\vec b &\geq d_{\txt{min}}\,,\\
        V\T(\vec \xi)\vec{\lambda}+A\T\vec \mu&=\vec{0}\,,\\
        \lVert A\T\vec \mu\rVert&=1\,.
    \end{split}
\end{align}
For the derivation of these dual implementations and the formulation for the case of general convex sets the interested reader is referred to \cite{Zhang2020} and the references therein.

Obviously the dual signed distance based obstacle representation comes at the cost of a large number of equality and inequality constraints that have to be added to the problem and an additional increase of the problem dimension due to the introduction of the dual variables.
For the point system case with $\nSamplePts$ sample points and $\nObstcls$ obstacles $2\nSamplePts\nObstcls$ nonlinear equality and inequality constraints and $\nSamplePts\sum_{i=1}^{\nObstcls} \nFaces_i$ additional dual decision variables result.
If the shape of the controlled system is to be considered the increase in dimension and number of constraints is even more drastic, with further $n\nSamplePts\nObstcls$ linear equality constraints, where $n$ is the dimension of the operating space, and $\nSamplePts\nObstcls\nFacesSys$ additional dual decision variables.
\subsection{Direct Signed Distance Obstacle Representation}
\label{subsec:gjk}
The dual signed distance obstacle representation uses a reformulation of the signed distance problem \eqref{eq:dist+pen} in terms of dual variables and subsequently solves the problem with the NLP solver used to tackle the OCP itself, yet there are dedicated algorithms specifically developed to solve the signed distance problem.
As mentioned above a direct implementation of a signed distance based obstacle formulation has to deal with the lack of an explicit equation of the signed distance in terms of the decision variables.
When the GJK algorithm \cite{Gilbert1988} and the expanding polytope algorithm (EPA) \cite{van2003collision} are used to calculate the distance and the penalty, respectively, they also provide the separation/penetration vector, i.e., $\vec p$ in \eqref{eq:dist} and \eqref{eq:pen}.
This does allow the calculation of the proper gradient in the current point in non-degenerate situations, as also used by \cite{Schulman2014}.
Degenerate situations are encountered when the closest features of two shapes are not unique points but parallel faces, compare \cite[Fig. 4]{Schulman2014}.
The approach is dismissed as a local linearization with uncertain error bounds by \cite{Zhang2020}, but this is not inherent to the method itself but rather caused by the way it is implemented in the context of a sequential convex optimization scheme by \cite{Schulman2014}.
Essentially, except for degenerate situations, from a numerical point of view the calculation of nonlinear collision avoidance constraints with the GJK and EPA algorithms can be handled in the same way as any nonlinear constraint in NLP, where the gradient in the current point, i.e., the local linearization, is provided to the solver.
In line with the findings of \cite{Schulman2014} degenerate situations do not appear to be a problem in practical application and, in any case, do not occur if the controlled system's shape is neglected.
To include the collision avoidance constraints in the OCP for $\nObstcls$ arbitrarily shaped but convex obstacles only $\nSamplePts\nObstcls$ inequalities of the form $d_{i,\,\txt{min}}-\sd(\vec\cdot,\Obstcl_i)\leq 0$ for $i=1,\dots,\nObstcls$ have to be considered, where $\vec\cdot$ may be replaced with $\{\pX\}$ or $\Vx$.
However, the computational cost of running the GJK and, in case of penetration, the EPA algorithm has to be taken into account.
\section{Novel CSG-based Obstacle Avoidance Constraint Formulation}
\label{sec:csg_new}
In this section a novel formulation based on the CSG concept for the obstacle avoidance constraints in an NLP describing an optimization based trajectory planning and control problem is outlined.
First the general idea is presented for the simple case in which the collision avoidance is only enforced for the controlled system's reference point, then it is discussed how the concept can be extended to consider the controlled system's shape.
\begin{assumption}
    \label{ass:unionPrim}
    An obstacle can be represented as a set $\Obstcl \in\R^n$ that results from the intersection of a finite number $\nFaces$ of primitives $\Prim_{i} \in \R^n$, $i=1,\dots,\nFaces$, i.e., $\Obstcl = \bigcap_{i=1}^{\nFaces} \Prim_{i}$.
\end{assumption}
\begin{lemma}[\cite{Shapiro1994}]
    \label{lem:signedDistance}
    The signed distance $\sd(\{\pX\},\Obstcl)$ is a valid PMC function to model objects with CSG.
\end{lemma}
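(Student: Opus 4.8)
The plan is to show that $\sd(\{\pX\},\Obstcl)$ reproduces exactly the three-way interior/boundary/exterior classification that characterises a PMC function, and then that it is compatible with the intersection operation of Assumption~\ref{ass:unionPrim}, so that it may serve as a defining function within the CSG construction. The only difference to the defining function $f_{\txt{o}}$ of Section~\ref{sec:obstclForm} is the choice of threshold: $f_{\txt{o}}$ separates interior from exterior at the level set $f_{\txt{o}}=1$, whereas $\sd$ does so at $\sd=0$ and additionally takes negative values in the interior. Since a PMC function is characterised by \emph{how} it classifies points and not by a particular threshold value, it suffices to verify that $\sd$ carries the correct sign in each of the three cases.

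First I would unpack the definitions \eqref{eq:dist}, \eqref{eq:pen} and \eqref{eq:sd} for the singleton $\Vx=\{\pX\}$. For a single point the translate $\{\pX\}+\vec p$ meets $\Obstcl$ exactly when $\pX+\vec p\in\Obstcl$, so $\dist(\{\pX\},\Obstcl)$ is the ordinary Euclidean distance from $\pX$ to $\Obstcl$ and $\pen(\{\pX\},\Obstcl)$ is the distance from $\pX$ to the complement of $\Obstcl$. Reading off \eqref{eq:sd} in the three cases then gives: if $\pX$ lies in the exterior, $\dist>0$ and $\pen=0$, hence $\sd>0$; if $\pX$ lies on the boundary $\partial\Obstcl$, both terms vanish and $\sd=0$; and if $\pX$ lies in the interior, $\dist=0$ and $\pen>0$, hence $\sd<0$. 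This is precisely the required sign pattern, establishing that $\sd(\{\pX\},\Obstcl)$ classifies points relative to $\Obstcl$ as a PMC function must.

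It remains to check compatibility with the CSG construction. Under Assumption~\ref{ass:unionPrim} we have $\Obstcl=\bigcap_{i=1}^{\nFaces}\Prim_{i}$, and by the argument above each $\sd(\{\pX\},\Prim_{i})$ is itself a PMC function for the corresponding primitive. A point belongs to $\Obstcl$ iff it belongs to every primitive, i.e.\ iff $\sd(\{\pX\},\Prim_{i})\le 0$ for all $i$, so the classifying sign of $\Obstcl$ coincides with that of $\max_i\sd(\{\pX\},\Prim_{i})$; dually, a union is classified by $\min_i$. This is exactly the $\max$/$\min$ logic that CSG implements, and that the smooth operators \eqref{eq:approxIntersCSG}, \eqref{eq:approxUnionCSG} realise for the non-negative defining function $f_{\txt{o}}$, so the per-primitive signed distances can be combined to recover the classification of $\sd(\{\pX\},\Obstcl)$, which is what modelling objects with CSG requires.

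The step that requires care is this last one, because the Euclidean signed distance to an intersection is in general \emph{not} equal to the maximum of the signed distances to the individual primitives: the two agree in sign everywhere but differ in magnitude in the corner regions, where the nearest point of $\Obstcl$ is a vertex rather than a face. The point I would make precise is that PMC validity depends only on the classification, i.e.\ the sign, which the $\max$/$\min$ correspondence preserves exactly; the magnitude discrepancy affects only the numerical value of the distance measure and not the set-membership statement, and hence does not compromise the use of $\sd(\{\pX\},\Obstcl)$ as a defining function in the CSG framework.
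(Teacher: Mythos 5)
Your proposal is correct, but note that the paper itself gives no proof of this lemma at all: it is stated as an imported result, attributed to \cite{Shapiro1994}, and the burden of justification is delegated entirely to that reference on real-valued point membership classification functions. Your argument is therefore necessarily a different route --- a self-contained, elementary verification built only from the paper's own definitions \eqref{eq:dist}, \eqref{eq:pen}, \eqref{eq:sd}. The core of it is sound: for a singleton, $\dist$ and $\pen$ reduce to the Euclidean distance to $\Obstcl$ and to its complement respectively, giving the sign pattern $\sd>0$ (exterior), $\sd=0$ (boundary), $\sd<0$ (interior), which is exactly the classification a PMC function must provide (with threshold $0$ instead of Ricci's threshold $1$, a distinction you rightly dismiss as immaterial). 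Your second step --- that intersections and unions are classified by $\max_i$ and $\min_i$ of the per-primitive signed distances, in sign though not in magnitude --- is also correct for closed primitives (membership $\Leftrightarrow \sd\le 0$ requires closedness, which holds for the half-spaces and balls the paper uses), and it is a nice touch: the magnitude discrepancy you isolate is precisely the reason Proposition~\ref{prop:PointSys} can only assert a lower bound $\max_{\Prim_i}\sd(\{\pX\},\Prim_i)\le\sd(\{\pX\},\Obstcl)$ rather than an equality, so your proof makes explicit the structural fact the paper's subsequent development relies on. What the paper's citation buys is generality (Shapiro's framework covers arbitrary real-valued PMC functions and set operations); what your verification buys is that the lemma becomes checkable within the paper's own notation, at the modest cost of restricting attention to the closed sets actually used.
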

\begin{proposition}
    \label{prop:PointSys}
    When Assumption \ref{ass:unionPrim} holds the signed distance $\sd(\{\pX\},\Obstcl)$ is bound from below by
    \begin{align}
        \label{eq:proposedPointVessel}
        \max_{\Prim_{i}}\left\{\sd(\{\pX\},\Prim_{i})\right\} \leq \sd(\{\pX\},\Obstcl)\,,
    \end{align}
    where the controlled system's position is $\pX$.
\end{proposition}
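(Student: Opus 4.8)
The plan is to reduce the claim to a single monotonicity property of the signed distance: if $S\subseteq T$ are two subsets of $\R^n$, then $\sd(\{\pX\},S)\geq\sd(\{\pX\},T)$. Granting this property, the proposition follows at once. Since $\Obstcl=\bigcap_{i=1}^{\nFaces}\Prim_{i}$ is contained in each of the primitives from which it is built, the inclusion $\Obstcl\subseteq\Prim_{i}$ holds for every $i$, so the monotonicity property gives $\sd(\{\pX\},\Obstcl)\geq\sd(\{\pX\},\Prim_{i})$ for each $i$. As $\sd(\{\pX\},\Obstcl)$ thus dominates every one of these terms, it is at least as large as the greatest of them, i.e.\ $\sd(\{\pX\},\Obstcl)\geq\max_{\Prim_{i}}\{\sd(\{\pX\},\Prim_{i})\}$, which is precisely \eqref{eq:proposedPointVessel}.

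It therefore remains to establish the monotonicity property, which I would prove by a case analysis driven by whether $\pX$ lies inside or outside the sets, exploiting the decomposition $\sd=\dist-\pen$ of \eqref{eq:sd}. For the point case the distance \eqref{eq:dist} reduces to the ordinary Euclidean distance from $\pX$ to the set, and because an infimum taken over the smaller set $S$ can only be larger than the one over $T$, one obtains $\dist(\{\pX\},S)\geq\dist(\{\pX\},T)$. Symmetrically, the penetration \eqref{eq:pen} reduces to the minimal translation that carries $\pX$ out of the set, which equals the Euclidean distance from $\pX$ to the set's complement; since $S\subseteq T$ entails $T^{c}\subseteq S^{c}$, the distance to the larger complement $S^{c}$ is no greater, so $\pen(\{\pX\},S)\leq\pen(\{\pX\},T)$. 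Combining the two inequalities through \eqref{eq:sd} yields $\sd(\{\pX\},S)\geq\sd(\{\pX\},T)$ whenever $\pX$ is interior to both sets or exterior to both sets.

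The main obstacle is the mixed configuration $\pX\in T$, $\pX\notin S$, in which $\dist$ and $\pen$ do not vanish for the same set and the two bounds above cannot simply be added. Here I would argue directly using signs: $\pX\notin S$ forces $\sd(\{\pX\},S)=\dist(\{\pX\},S)\geq 0$, whereas $\pX\in T$ forces $\sd(\{\pX\},T)=-\pen(\{\pX\},T)\leq 0$, so the inequality holds trivially. The remaining configuration $\pX\in S$, $\pX\notin T$ is excluded by $S\subseteq T$, so these three cases are exhaustive; the boundary situations in which $\pX$ lies on $\partial S$ or $\partial T$ are absorbed by the (Lipschitz) continuity of the signed distance in its point argument, which completes the argument.
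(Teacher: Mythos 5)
Your proof is correct and is essentially the paper's own argument: from $\Obstcl\subseteq\Prim_{i}$ you deduce the two monotonicity bounds $\dist(\{\pX\},\Prim_{i})\leq\dist(\{\pX\},\Obstcl)$ and $\pen(\{\pX\},\Prim_{i})\geq\pen(\{\pX\},\Obstcl)$, and conclude via the decomposition $\sd=\dist-\pen$ of \eqref{eq:sd}. The only deviation is your claim that in the mixed configuration $\pX\in T$, $\pX\notin S$ the two bounds ``cannot simply be added'': that worry is unfounded, because $\dist(\{\pX\},S)\geq\dist(\{\pX\},T)$ and $-\pen(\{\pX\},S)\geq-\pen(\{\pX\},T)$ are same-direction inequalities which sum unconditionally to $\sd(\{\pX\},S)\geq\sd(\{\pX\},T)$, with no reference to where $\pX$ lies --- this unconditional addition is precisely the paper's one-line proof. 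Consequently your case analysis, and in particular the Lipschitz-continuity patch for boundary points (which for truly arbitrary sets would even be delicate, since the non-boundary set need not be dense), is valid but superfluous.
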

\begin{proof}
    As $\forall i\in \{1,\dots,\nFaces\} : \Obstcl\subseteq\Prim_{i}$ obviously $\dist(\{\pX\},\Prim_{i}) \leq \dist(\{\pX\},\Obstcl)$ and $\pen(\{\pX\},\Prim_{i}) \geq \pen(\{\pX\},\Obstcl)$ follow. With the definition of the signed distance \eqref{eq:sd} Proposition \ref{prop:PointSys} holds.
\end{proof}
Clearly this is advantageous whenever it is easy to determine the signed distance to the individual primitives, as, e.g., if they are half-spaces, balls and alike.
\begin{assumption}
    \label{ass:unionPrimSys}
    Let the controlled system's shape be described by the set $\Vx\in\R^n$, resulting from the intersection of a finite number $\nFacesSys$ of primitives $\Prim_{\mathcal{V},\,j}(\vec \xi)\in\R^n$, $j=1,\dots,\nFacesSys$, i.e., $\Vx = \bigcap_{j=1}^{\nFacesSys} \Prim_{\mathcal{V},\,j}(\vec \xi)$.
\end{assumption}
To be able to consider the controlled system's shape the Minkowski difference, defined for two sets $\Vx,\Obstcl\in\R^n$
\begin{align}
    \label{eq:defMinkDiff}
    \Obstcl-\Vx := \{(\vec o - \vec v)\in \R^n|\vec o \in \Obstcl \land \vec v \in \Vx\}
\end{align}
is adopted, a concept that the signed distance based methods outlined above also make use of internally.
\begin{lemma}[\cite{Cameron1986}]
    \label{lem:minkDiff}
    $\sd(\Vx,\Obstcl) = \sd(\vec 0,\Obstcl-\Vx)$
\end{lemma}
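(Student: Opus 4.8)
The plan is to reduce the whole statement to a single set-membership equivalence and then read off the two constituents of the signed distance separately. The key observation is that translating $\Vx$ by a vector $\vec p$ makes it meet $\Obstcl$ exactly when $\vec p$ lies in the Minkowski difference $\Obstcl-\Vx$. Concretely, $(\Vx+\vec p)\cap\Obstcl\neq\emptyset$ holds iff there exist $\vec v\in\Vx$ and $\vec o\in\Obstcl$ with $\vec v+\vec p=\vec o$, i.e.\ iff $\vec p=\vec o-\vec v$ for some such pair, which by the definition \eqref{eq:defMinkDiff} is precisely the statement $\vec p\in\Obstcl-\Vx$. I would establish this equivalence first, since it is the one place where the definition of $\Obstcl-\Vx$ actually enters the argument.

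With this in hand, substituting into the definition \eqref{eq:dist} immediately rewrites $\dist(\Vx,\Obstcl)=\inf\{\lVert\vec p\rVert:\vec p\in\Obstcl-\Vx\}$, and substituting into \eqref{eq:pen} rewrites $\pen(\Vx,\Obstcl)=\inf\{\lVert\vec p\rVert:\vec p\notin\Obstcl-\Vx\}$. The second step is to evaluate the right-hand side of the claim by applying the very same definitions, now with $\Vx$ replaced by the single point $\vec 0$ and $\Obstcl$ replaced by $\Obstcl-\Vx$. Because translating the singleton $\{\vec 0\}$ by $\vec p$ yields $\{\vec p\}$, the intersection condition $(\{\vec 0\}+\vec p)\cap(\Obstcl-\Vx)\neq\emptyset$ collapses to $\vec p\in\Obstcl-\Vx$, so $\dist(\vec 0,\Obstcl-\Vx)$ and $\pen(\vec 0,\Obstcl-\Vx)$ reduce to exactly the same two infima obtained above. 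Matching the distance terms and the penetration terms separately and subtracting as in \eqref{eq:sd} then yields the assertion.

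I expect the only real subtlety to be bookkeeping of the sign convention: the set in \eqref{eq:defMinkDiff} is the Minkowski sum of $\Obstcl$ with the reflection of $\Vx$, so one must check that the offset enters with the correct sign in the chain $\vec v+\vec p=\vec o\Leftrightarrow\vec p\in\Obstcl-\Vx$; getting this right is what makes translating $\Vx$ (rather than $\Obstcl$) land on $\Obstcl-\Vx$ and not on $\Vx-\Obstcl$. A secondary point worth a remark is the mild abuse of notation in writing $\vec 0$ for the singleton $\{\vec 0\}$ in the statement; reading it as the point set $\{\vec 0\}$ is what lets the definitions \eqref{eq:dist}--\eqref{eq:pen} be applied verbatim. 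Everything else is a direct substitution, and no compactness or limiting argument is needed beyond what is already implicit in the infima.
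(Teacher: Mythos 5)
Your proof is correct, but note that the paper itself offers no proof to compare against: Lemma~\ref{lem:minkDiff} is imported wholesale by citation to \cite{Cameron1986}. Your argument is the standard, self-contained verification that the citation stands in for, and it is sound. The core equivalence $(\Vx+\vec p)\cap\Obstcl\neq\emptyset\Leftrightarrow\vec p\in\Obstcl-\Vx$ follows exactly as you say from \eqref{eq:defMinkDiff}, and its negation gives the complementary equivalence $(\Vx+\vec p)\cap\Obstcl=\emptyset\Leftrightarrow\vec p\notin\Obstcl-\Vx$ needed for the penetration term \eqref{eq:pen} (you use this implicitly; it is worth one explicit line). Since both $\dist$ and $\pen$, and hence $\sd$ via \eqref{eq:sd}, depend on the pair of sets only through these membership conditions, and since the same conditions reappear verbatim when \eqref{eq:dist}--\eqref{eq:pen} are evaluated for the pair $(\{\vec 0\},\Obstcl-\Vx)$, the two sides of the lemma are infima over literally identical feasible sets, so equality holds with no compactness or limiting argument---a point you correctly flag. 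Your attention to the sign convention is also right: translating $\Vx$ rather than $\Obstcl$ is precisely what makes the offset land in $\Obstcl-\Vx$ and not $\Vx-\Obstcl$, matching the asymmetric definition \eqref{eq:defMinkDiff}. In short, your proposal supplies a complete elementary proof of a statement the paper delegates to the literature, using only the paper's own definitions.
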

Lemma~\ref{lem:minkDiff} reduces the problem of finding the signed distance between two sets to finding the signed distance between a point and a set.
\begin{lemma}
    The Minkowski difference distributes over the set intersection, i.e., for sets $\mathcal{A}, \mathcal{B}, \mathcal{C} \in \R^n$ it holds \mbox{$(\A \cap \B) - \C = (\A-\C) \cap (\B -\C)$}.
\end{lemma}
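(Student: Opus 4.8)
The plan is to prove the claimed set identity by mutual containment, chasing a generic element through the defining relation \eqref{eq:defMinkDiff}: a point belongs to $\mathcal{X}-\C$ precisely when it can be written as $\vec x - \vec c$ with $\vec x \in \mathcal{X}$ and $\vec c \in \C$.

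The containment ``$\subseteq$'' is the routine part and I would dispatch it first. Given $\vec z \in (\A \cap \B) - \C$, there are $\vec w \in \A \cap \B$ and $\vec c \in \C$ with $\vec z = \vec w - \vec c$. Since the \emph{same} witness satisfies $\vec w \in \A$ and $\vec w \in \B$, the representation $\vec z = \vec w - \vec c$ simultaneously certifies $\vec z \in \A - \C$ and $\vec z \in \B - \C$, hence $\vec z \in (\A - \C) \cap (\B - \C)$.

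The reverse containment ``$\supseteq$'' is where the real work sits, and I expect it to be the main obstacle. Starting from $\vec z \in (\A - \C) \cap (\B - \C)$, the relation \eqref{eq:defMinkDiff} only supplies two \emph{separate} representations $\vec z = \vec a - \vec c_1 = \vec b - \vec c_2$ with $\vec a \in \A$, $\vec b \in \B$, and $\vec c_1, \vec c_2 \in \C$. To reach $(\A \cap \B) - \C$ I must instead produce a single pair $(\vec w, \vec c)$ with $\vec w \in \A \cap \B$, $\vec c \in \C$, and $\vec z = \vec w - \vec c$. Equivalently, I would need one common $\vec c \in \C$ for which $\vec z + \vec c$ lands in $\A$ and in $\B$ at once, i.e. $\C \cap (\A - \vec z) \cap (\B - \vec z) \neq \emptyset$ knowing only that each of $\C \cap (\A - \vec z)$ and $\C \cap (\B - \vec z)$ is nonempty. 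Reconciling the a priori distinct shifts $\vec c_1$ and $\vec c_2$ into a single admissible $\vec c$ is the crux: I would plan to exploit whatever geometric structure the application provides (convexity of the sets, and the fact that $\C$ plays the role of the controlled system's shape) to force the two nonempty slices to meet, and I would examine this step with particular care, since for wholly unstructured $\A, \B, \C$ the two slices need not intersect and only the first containment would survive.
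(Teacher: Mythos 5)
Your hesitation about the reverse containment is not a gap in your argument --- it is the correct diagnosis, and you should push it to its conclusion: under the stated definition \eqref{eq:defMinkDiff} the inclusion $(\A-\C)\cap(\B-\C)\subseteq(\A\cap\B)-\C$ is \emph{false}, so no structural hypothesis of the kind you hoped for will close the gap; in particular convexity does not repair it. Take $\A=\{0\}$, $\B=\{2\}$, $\C=[0,2]$ in $\R$ (all compact and convex): then $(\A\cap\B)-\C=\emptyset$, while $(\A-\C)\cap(\B-\C)=[-2,0]\cap[0,2]=\{0\}$. Nonempty intersection does not help either: for $\A=[0,1]\times\{0\}$, $\B=\{(t,t)\,|\,t\in[0,1]\}$, $\C=\{0\}\times[-1,0]$ one has $(\A\cap\B)-\C=\{0\}\times[0,1]$, yet $(\tfrac{1}{2},1)=(\tfrac{1}{2},0)-(0,-1)=(\tfrac{1}{2},\tfrac{1}{2})-(0,-\tfrac{1}{2})$ lies in $(\A-\C)\cap(\B-\C)$. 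The paper's own proof of this direction commits exactly the quantifier slip you isolated: it fixes an \emph{arbitrary} $\vec c\in\C$ and then reads \eqref{eq:defMinkDiff} as supplying $\vec a\in\A$, $\vec b\in\B$ with $\vec x=\vec a-\vec c=\vec b-\vec c$, whereas the definition only guarantees \emph{some} representing pair, whose $\C$-component cannot be chosen freely and need not coincide between the two sets --- your ``two slices'' obstruction. What that argument does establish is distributivity over intersection of the erosion (Pontryagin difference) $\A\ominus\C=\{\vec x\,|\,\vec x+\C\subseteq\A\}$, for which the identity is true; but that is a different operation from \eqref{eq:defMinkDiff}, and it is the sum-type difference \eqref{eq:defMinkDiff} that Lemma~\ref{lem:minkDiff} requires.

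The damage is, however, contained, which also settles your ``only the first containment survives'' scenario: that is indeed all the paper needs. The proposition built on this lemma, \eqref{eq:proposedVesselOutline}, asserts only a lower bound, and for that the easy inclusion you proved --- or, even more directly, the monotonicity $\Vx\subseteq\Prim_{\mathcal{V},\,j}\Rightarrow\Obstcl-\Vx\subseteq\Obstcl-\Prim_{\mathcal{V},\,j}$, immediate from \eqref{eq:defMinkDiff} --- suffices: combined with the fact already used in the proof of Proposition~\ref{prop:PointSys}, namely that enlarging a set can only decrease the signed distance to it, one obtains $\sd(\vec 0,\Obstcl-\Prim_{\mathcal{V},\,j})\leq\sd(\vec 0,\Obstcl-\Vx)=\sd(\Vx,\Obstcl)$, and analogously $\sd(\vec 0,\Prim_i-\Vx)\leq\sd(\Vx,\Obstcl)$. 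So the correct repair is to weaken the lemma to the one-sided inclusion (or restate it for the erosion and not use it where the sum-type difference is needed), rather than to search for geometric conditions forcing your two slices to meet --- none that are practically relevant exist.
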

\begin{proof}
    Consider $\vec x\in(\A-\C)\cap(\B-\C)$ and $\vec c \in \C$. Clearly $\vec x \in (\A-\C) \land \vec x \in (\B-\C)$. With \eqref{eq:defMinkDiff} $\exists \vec a\in\A,  \vec b\in\B: \vec x = \vec a - \vec c \land \vec x = \vec b - \vec c$ $\Rightarrow \vec a = \vec b \in \A \cap \B$. Thus applying \eqref{eq:defMinkDiff} again $(\A\cap\B)-\C \supseteq (\A-\C)\cap(\B-\C)$ follows.
    
    Now take $\vec y \in (\A\cap\B)-\C$. Again using \eqref{eq:defMinkDiff} $\exists \vec z \in \A \cap \B, \vec c \in \C: \vec y = \vec z - \vec c.$ Now with $\vec z \in \A \cap \B \Rightarrow \exists \vec a \in \A, \vec b \in \B: \vec a = \vec b = \vec z$. Thus with \eqref{eq:defMinkDiff} $\vec y \in \A-\C \land \vec y \in \B-\C$ and $(\A-\C)\cap(\B-\C) \supseteq (\A\cap\B)-\C$.
\end{proof}
\begin{proposition}
    If Assumption \ref{ass:unionPrim} and Assumption \ref{ass:unionPrimSys} hold the signed distance $\sd(\Vx,\Obstcl)$ is bound from below by
    \begin{align}
        \label{eq:proposedVesselOutline}
        \begin{split}
            \max_{\Prim_{i},\,\Prim_{\mathcal{V},\,j}}&\left\{\sd(\vec 0,\Obstcl -\Prim_{\mathcal{V},\,j}(\vec \xi)),\sd(\vec 0,\Prim_{i}- \Vx\right\} \\
        \leq &\sd(\Vx,\Obstcl)\,,
    \end{split}
    \end{align}
    with $i\in\{1,\dots,\nFaces\}$ and $j\in\{1,\dots,\nFacesSys\}$.
\end{proposition}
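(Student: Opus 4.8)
The plan is to reduce the set-to-set signed distance to a point-to-set signed distance and then exploit the two structural lemmas already available. First I would invoke Lemma~\ref{lem:minkDiff} to rewrite $\sd(\Vx,\Obstcl) = \sd(\vec 0,\Obstcl-\Vx)$, turning the quantity to be bounded into a point-to-set signed distance, which is precisely the form that Proposition~\ref{prop:PointSys} is designed to handle. It then suffices to bound each of the two families of terms appearing in the maximum in~\eqref{eq:proposedVesselOutline} by $\sd(\vec 0,\Obstcl-\Vx)$ separately.

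For the terms $\sd(\vec 0,\Prim_{i}-\Vx)$ I would use the distributivity of the Minkowski difference over intersection established in the preceding lemma, applied iteratively to the decomposition of Assumption~\ref{ass:unionPrim}, to obtain $\Obstcl-\Vx = \bigcap_{i=1}^{\nFaces}(\Prim_{i}-\Vx)$. This exhibits $\Obstcl-\Vx$ as an intersection of the sets $\Prim_{i}-\Vx$, so a direct application of Proposition~\ref{prop:PointSys} with the point $\vec 0$ immediately yields $\max_{i}\sd(\vec 0,\Prim_{i}-\Vx) \leq \sd(\vec 0,\Obstcl-\Vx)$.

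For the remaining terms $\sd(\vec 0,\Obstcl-\Prim_{\mathcal{V},\,j}(\vec \xi))$ I would argue by monotonicity instead. Since Assumption~\ref{ass:unionPrimSys} gives $\Vx = \bigcap_{j=1}^{\nFacesSys}\Prim_{\mathcal{V},\,j}(\vec \xi)$, each $\Vx\subseteq\Prim_{\mathcal{V},\,j}(\vec \xi)$; enlarging the subtracted set enlarges the Minkowski difference in the sense of the defining equation~\eqref{eq:defMinkDiff}, so that $\Obstcl-\Vx \subseteq \Obstcl-\Prim_{\mathcal{V},\,j}(\vec \xi)$. Re-using the elementary monotonicity of the signed distance under set inclusion already employed in the proof of Proposition~\ref{prop:PointSys} — a smaller set has larger distance and smaller penetration, hence larger signed distance — I would conclude $\sd(\vec 0,\Obstcl-\Prim_{\mathcal{V},\,j}(\vec \xi)) \leq \sd(\vec 0,\Obstcl-\Vx)$.

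Combining the two estimates, every argument of the maximum in~\eqref{eq:proposedVesselOutline} is bounded above by $\sd(\vec 0,\Obstcl-\Vx) = \sd(\Vx,\Obstcl)$, and the claim follows. I expect the main subtlety to be the monotonicity of the Minkowski difference in its second argument, together with keeping the direction of all the set inclusions — and the resulting inequalities for $\dist$, $\pen$ and $\sd$ — consistent; once that is pinned down, the distributivity step and the invocation of Proposition~\ref{prop:PointSys} are routine.
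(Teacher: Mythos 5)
Your proof is correct, and its skeleton is the paper's: reduce to a point-to-set problem via Lemma~\ref{lem:minkDiff}, then bound the two families of terms in \eqref{eq:proposedVesselOutline} separately, handling the terms $\sd(\vec 0,\Prim_{i}-\Vx)$ exactly as the paper does (distributivity of the Minkowski difference over the intersection in the minuend, followed by the argument of Proposition~\ref{prop:PointSys} applied at $\vec 0$). Where you genuinely deviate is in the family $\sd(\vec 0,\Obstcl-\Prim_{\mathcal{V},\,j}(\vec \xi))$: the paper again invokes distributivity, but this time in the subtrahend, writing $\Obstcl-\bigcap_{j}\Prim_{\mathcal{V},\,j} = \bigcap_{j}(\Obstcl-\Prim_{\mathcal{V},\,j})$, a statement the preceding lemma (which concerns $(\A\cap\B)-\C$) does not literally provide, and which under the definition \eqref{eq:defMinkDiff} holds in general only as the inclusion $\Obstcl-\Vx\subseteq\bigcap_{j}(\Obstcl-\Prim_{\mathcal{V},\,j})$. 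Your replacement --- $\Vx\subseteq\Prim_{\mathcal{V},\,j}$ implies $\Obstcl-\Vx\subseteq\Obstcl-\Prim_{\mathcal{V},\,j}$, and the signed distance from a point is antitone under set inclusion (a larger set has smaller distance and larger penetration) --- needs nothing beyond \eqref{eq:defMinkDiff} and the monotonicity already used in the proof of Proposition~\ref{prop:PointSys}, so it sidesteps the unproved second-argument distributivity altogether. What each approach buys: the paper's treatment is uniform, dispatching both families by the same distributivity-plus-intersection device, while yours is more elementary and, on this second family, actually more rigorous; note that both proofs reach a valid conclusion regardless, because the final inequalities only ever require the inclusion direction that does hold, never the full set equality.
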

\begin{proof}
    Under Assumptions \ref{ass:unionPrim} and \ref{ass:unionPrimSys}, using the Minkowski difference and its distributive property w.r.t. the set intersection the equalities $\sd(\Vx,\Obstcl) = \sd(\vec 0,\Obstcl-\Vx) = \sd(\vec 0, \Obstcl - \bigcap_{j=1}^\nFacesSys\Prim_{\mathcal{V},\,j}) = \sd(\vec 0,\bigcap_{j=1}^\nFacesSys(\Obstcl - \Prim_{\mathcal{V},\,j}))$ hold. In the same manner $\sd(\Vx,\Obstcl)= \sd(\vec 0,\bigcap_{i=1}^\nFaces(\Prim_i - \Vx))$ results. Following the proof of Proposition~\ref{prop:PointSys} $\max_{\Prim_{\mathcal{V},\,j}}\sd(\vec 0,\Obstcl-\Prim_{\mathcal{V},\,j}) \leq \sd(\vec 0,\bigcap_{j=1}^\nFacesSys(\Obstcl - \Prim_{\mathcal{V},\,j}))$ and  $\max_{\Prim_{i}}\sd(\vec 0,\Prim_i-\Vx) \leq \sd(\vec 0,\bigcap_{i=1}^\nFaces(\Prim_i - \Vx))$ follow.
\end{proof}

Informally speaking, similar to the point system case the lower bound on the signed distance between the set occupied by the system to an obstacle is given by the maximum of the signed distance between the origin and the geometric primitives forming the Minkowski difference of the system and the obstacle.
For the gradient the same arguments as made in subsection  \ref{subsec:gjk} hold regarding degenerate situations. 
Also, the maximum function yields a non-smooth gradient elsewhere.
However, if a smooth gradient is required for all non-degenerate situations the obstacle avoidance constraints can be implemented using a smooth approximation of the maximum function.
The LogSumExp (LSE) function $\lse : \R^m \mapsto \R$,
\begin{align}
    \label{eq:logSumExp}
    \lse(d_1,\dots, d_m) &= \frac{1}{\alpha}\log\!\bigg(\!\sum_{i=1}^m\!\alpha(d_i-d_0)\!\bigg)\! +d_0\,,\!
\end{align}
is a stable solution and well established in the machine learning and artificial intelligence community.
Here $\alpha\in\{\R\backslash0\}$ determines the quality of the approximation, with $\lse(d_1,\dots,d_m) \to \max(d_1,\dots,d_m)$ for $\alpha \to \infty$ and $\lse(d_1,\dots,d_m) \to \min(d_1,\dots,d_m)$ for $\alpha \to -\infty$.
The parameter $d_0$, typically chosen to be $d_0 = \max(d_1,\dots,d_m)$ for $\alpha>0$ or $d_0 \min(d_1,\dots,d_m)$ for $\alpha<0$, is introduced for reasons of numerical stability.
For a given $\alpha>0$ the approximation error is bound according to 
\begin{align}   
    \begin{split}
    &\max(d_1,\dots,d_m)\leq\lse(d_1,\dots,d_m)\\
    \leq& \max(d_1,\dots,d_m) + \frac{1}{\alpha}\log(m)\,,
\end{split}
\end{align}
the equivalent holds for $\alpha <0$.  
The availability of an upper bound on the approximation error allows to introduce a small safety distance $d_\txt{min} = \nicefrac{1}{\alpha}\log{m}$ to ensure that despite the use of the smooth approximation no collision occurs in any point.

As a CSG based formulation it is possible to implement the collision avoidance constraints using the union over all obstacles, possibly in terms of the smooth approximation of the minimum \eqref{eq:logSumExp} depending on the requirements of the solver.
This reduces the required number of collision avoidance constraints needed to $\nSamplePts$, the number of sample points, irrespective of the number of (sub-)obstacles $\nObstcls$.
The same is true for the simple ellipsoidal obstacle representation and the direct signed distance obstacle representation from subsection \ref{subsec:gjk}, as both are valid PMC functions.

\section{Comparative Simulation Study}
\label{sec:results}

As a benchmark problem the energy optimal point to point trajectory planning for underactuated autonomous surface vessels as presented in \cite{Lutz} is considered.
There it is demonstrated how using a flatness based direct method this task may be expressed as a finite dimensional NLP.
In the following first key elements of the approach are outlined, then the layout of the comparative study of the collision avoidance constraint formulations discussed above is described. 
The former is intentionally kept brief as the focus of the present work is on the latter.

The OCP is formulated using the nonlinear dynamics
\begin{subequations}
    \label{eq:dynVesselModel}
    \begin{align}
        \vdot{\eta} &= R(\psi) \vec{\nu}\\
        M\vdot{\nu} &= -\br{C(\vec{\nu})+D(\vec{\nu})}\vec{\nu}+ \vec{\tau}\,, 
    \end{align}
\end{subequations}
where the vessel pose in the north-east-down (NED) frame is denoted by $\vec \eta$. 
The body fixed velocities $\vec \nu$ are calculated based on the effect of the external forces and moments $\vec \tau = [\tau_u, \tau_v, \tau_r]\T$.
Here $M$ is the inertia matrix, $C(\vec \nu)$ the coriolis matrix and $D(\vec \nu)$ the damping matrix including nonlinear damping terms.
Actuator limits and actuator rate limits are considered in the form of box constraints.

To realize energy optimal trajectory planning the cost function uses $l(t) = (\nicefrac{\tau_u}{\tau_{u,\,\txt{max}}})^2 + (\nicefrac{\tau_r}{\tau_{r,\,\txt{max}}})^2$ as the running cost, assuming that the quadratic form adequately approximates instantaneous energy consumption of the actuators.

For the fully actuated case the dynamics \eqref{eq:dynVesselModel} are differentially flat, with a flat output $\vec z$ given by the position and pose of the system in the NED frame, i.e., $\vec z = \vec \nu$.
Thus exact flat parametrizations $\vec \theta_{\vec x}(\vec z, \vdot z, \vddot z)$ and $\vec \theta_{\vec \tau}(\vec z, \vdot z, \vddot z)$ of the system states and inputs as functions of the flat output and its time derivatives exist \cite{Fliess1995}.
This allows to write all constraints in terms of the flat output and its derivatives, as well as the cost function, which then implicitly encodes the system dynamics.
Considering a parametrized ansatz function for the flat output yields a finite dimensional NLP  of the form \eqref{eq:nlpGeneral}, where the decision variables are the parameters of the ansatz function.
To apply to the underactuated case the sway force $\tau_v$ is constrained to zero by means of an equality constraint and the control input is given by the surge force $\tau_u$ and the yaw moment $\tau_r$. 
For details and parameter values see \cite{Lutz} and the references therein.

The benchmark scenario is built to replicate the setting of the real world maritime test field in Kiel, Germany at $\unit[54^\circ19'40'']{N}, \unit[10^\circ09'54'']{E}$.
The setting is re-scaled as tests using a small unmanned surface vessel (USV) are planned.
For further information on the USV model the reader is referred to \cite{Do2006}.
A local NED coordinate system is defined with the origin centered in the origin of the body fixed coordinate system of the vessel at $t_0 = 0$.
The scenario is defined such that the vessel is at rest initially, $x_0 = [0,0,\unit[-135]{^\circ},0,0,0]\T$, and a trajectory to leave the harbor area is planned with $\te=\unit[60]{s}$ and $x_\txt{e}=[0,\unit[-8.5]{m},0,\unitfrac[0.3]{m}{s},0,0]\T$.
The OCP is solved with a total number of 61 equally spaced sample points using the NLP solver SNOPT \cite{GilMS05}.
To allow for an equal comparison the harbor structures are enclosed in 3 ellipses when using the ellipsoidal formulation, in 3 rounded rectangular shapes for the original CSG formulation, compare \cite[eq. (14)]{Lutz}, or in 3 polygons with a total number of 18 vertices for all other cases.
The vessel shape is described by a symmetrical polygon with a length of \unit[1.2]{m} and a width of \unit[0.36]{m} and 5 vertices, corresponding to the dimensions of the USV, see Fig.~\ref{fig:NEDframe}.
\begin{figure}[!t]
    \vspace*{2mm}
    \captionsetup[subfigure]{aboveskip=-1pt,belowskip=0pt}
    \begin{subfigure}[t]{\linewidth}
        \centering{
            \includegraphics{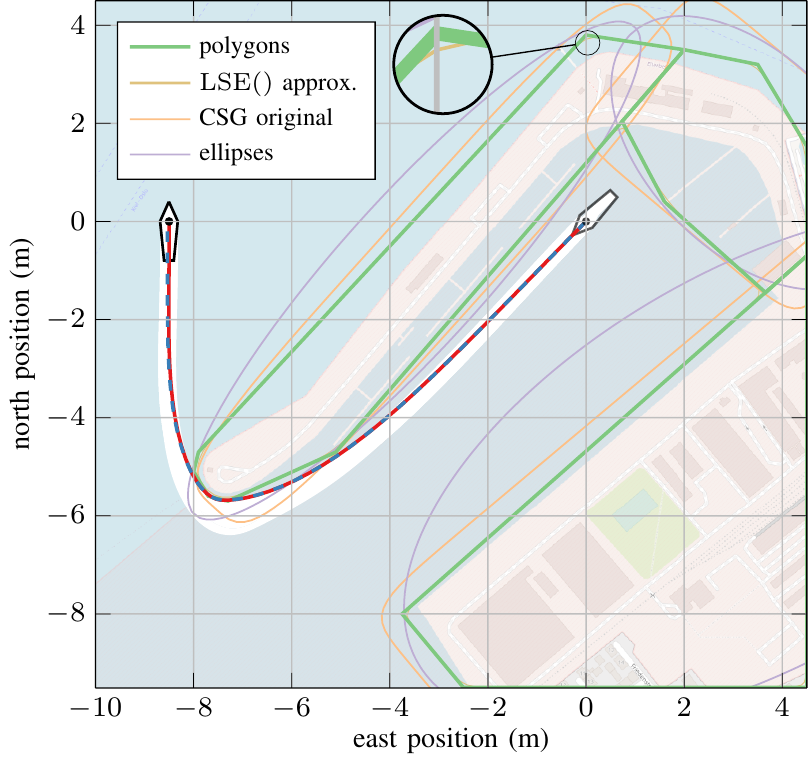}}
        \caption{Local NED frame with vessel and all obstacle shapes used for the simulation study. Additionally the swept volume of the optimization result for the proposed formulation \eqref{eq:proposedVesselOutline} using a hard maximum and the union over all obstacles taking system's shape into account is given (white).}
        \label{fig:NEDframe}
    \end{subfigure}%
    \begin{subfigure}[t]{\linewidth}
        \centering{
            \includegraphics{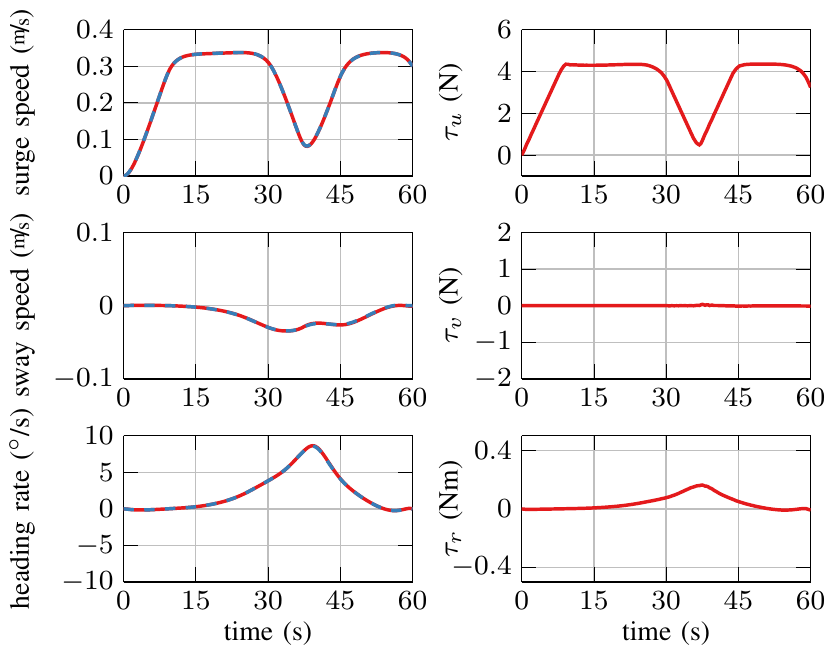}}
        \caption{Body fixed speeds and control inputs.}
        \label{fig:speedsInputs}
    \end{subfigure}%
    \caption{Benchmark scenario overview including optimization results (red) and and simulation results using \matlab's \texttt{ode45()} (blue, dashed) for the proposed formulation \eqref{eq:proposedPointVessel} using a hard maximum and the union over all obstacles.}
\end{figure}
\begin{table*}[!t]
    \centering
    \vspace*{2mm}
    \renewcommand{\arraystretch}{1.1}
    \caption{Results of the simulation study. The novel formulation for the collision avoidance constraints according to Section \ref{sec:csg_new} is denoted by 'proposed hard $\max()$' for \eqref{eq:proposedPointVessel} and 'proposed $\lse()$' if the approximation \eqref{eq:logSumExp} is used.}
    \begin{subtable}[b]{\textwidth}
        \caption{Point system case.}
        \label{tab:pointVessel}
        \begin{tabularx}\textwidth{l c rr c rr c r c rr c rr}
            \toprule
            & \hspace*{0ex} & \multicolumn{2}{c}{proposed hard $\max()$} & \hspace*{0ex} & \multicolumn{2}{c}{proposed $\lse()$} & \hspace*{0ex} & \multicolumn{1}{c}{\multirow{2}{*}{ellipsoidal}} & \hspace*{0ex} & \multicolumn{2}{c}{CSG original} & \hspace*{0ex} & \multicolumn{2}{c}{signed distance}\\
            \cmidrule{3-4} \cmidrule{6-7} \cmidrule{11-12} \cmidrule{14-15}
            && union & separate && union & separate && && union & separate && dual & direct \\
            \midrule
            obstacle constraints            && 61           & 183       && 61       & 183       && 183  && 61       & 183   &&  366     &183\\
            optimization variables          && 189          & 189       && 189      & 189       && 1189  && 189      & 189   && 1287     &189\\
            constraint function calls       && 31           & 35        && 25       & 34        && 40   && 34       & 26    && 81       &47\\
            max. constraint violation       && 1.17e-15     & 2.66e-15  && 6.39e-2  & 6.39e-2   && 0    && 0        & 0     && 1.78e-15 &3.3e-15\\
            cost function value             && 38.22        & 38.27     && 37.58    & 37.58     && 42.91&& 40.78    & 40.78 && 38.62    &38.15\\
            run time solver only [s]         && 1.29         & 1.39      && 1.93     & 1.48      && 1.38 &&  1,37    & 1.46  && 2,28     &1.67\\
            run time total [s]               && 1.47         & 1.66      && 2.08     & 1.78      && 1.68 && 1.60     & 1.68  && 4.46     &2.02\\
            run time total (relative)        && 1            & 1.23      && 1.41     & 1.21      && 1.14 && 1.09     & 1.14  && 3.18     &1.37\\
            \bottomrule
        \end{tabularx}
    \end{subtable}
    \vspace{0.3cm}

    \begin{subtable}[b]{\textwidth}
        \centering
        \caption{System shape considered.}
        \label{tab:vesselOutline}
        \begin{tabular}{l c rr c rr c rr}
            \toprule
            & \hspace*{0ex} & \multicolumn{2}{c}{proposed hard $\max()$} & \hspace*{0ex} & \multicolumn{2}{c}{proposed $\lse()$} &  \hspace*{0ex} & \multicolumn{2}{c}{signed distance}\\
            \cmidrule{3-4} \cmidrule{6-7} \cmidrule{9-10}
            && union & separate && union & separate && dual & direct \\
            \midrule
            obstacle constraints            && 61           & 183       && 61       & 183       && 732      &183\\
            optimization variables          && 189          & 189       && 189      & 189       && 2202     &189\\
            constraint function calls       && 22           & 24        && 82       & 26        && 123      &23\\
            max. constraint violation       && 0            & 3.03e-10  && 0.11     & 0.11      && 2.6e-8   &0\\
            cost function value             && 40.18        & 40.18     && 38.90    & 38.90     && 39.92    &40.18\\
            run time solver only [s]         && 0.95         & 1.29      && 4.61     & 1.22      && 220      &1.26\\
            run time total [s]               && 1.55         & 1.90      && 7.13     & 2.02      && 248      &1.52\\
            run time total (relative)        && 1            & 1.23      && 4.60     & 1.30      && 160      &0.98\\
            \bottomrule
        \end{tabular}
    \end{subtable}
\end{table*}
The polygons are described in the form of \eqref{eq:obstclPolygon}, essentially using half planes as geometric primitives for the novel CSG based method.
For reference the body fixed speeds and control inputs are given in Fig.~{\ref{fig:speedsInputs}.
Run times for a \matlab implementation on a laptop computer with Intel Core i5-6200U CPU clocked at \unit[2.30]{GHz} for the different methods of specifying the collision avoidance constraints are reported in Table~\ref{tab:pointVessel} for the point system case and in Table~\ref{tab:vesselOutline} for the methods that allow to consider the system's shape.
All reported times are the average over ten runs.
The maximum constraint violation specifies the maximum collision avoidance constraint violation of the optimization output and is calculated for the actual obstacle shapes, not the smoothed approximations using the $\lse()$-function \eqref{eq:logSumExp}.
The number of obstacle constraint function calls and the run time of the solver itself, i.e., the time required to solve the problem not taking into account the time spent in user defined functions, are given to demonstrate that the effect of the code efficiency of the user defined functions on the qualitative assessment of the results is negligible.

The results show that the novel CSG based obstacle avoidance constraint formulation is able to efficiently encode the required constraints and that it does perform very well in both the point system case and also considering the system dimension.
This is especially obvious in comparison to the ellipsoidal method, which additionally suffers regarding the optimality of the results due to the limitations in shaping the obstacle constraints.
While for the original CSG based method and the formulation using a hard maximum it appears advantageous to reduce the number of collision avoidance constraints by taking the union over all obstacles, for the smoothed case using the $\lse$-function this is not the case, most likely as the benefit of a reduction on constraints is outweighed by the increased nonlinearity.
In all cases an educated initial guess using the \aStar based concept described in \cite[Sec. III-B]{Lutz} is used, including the dual variables.
Nevertheless the dual signed distance obstacle formulation performs poorly for the point system case and scales very badly as evident when considering the system's shape.
The direct signed distance implementation on the other hand benefits from the efficiency of the highly refined CSG and EPA algorithms for determining the signed distance of two convex shapes, as highlighted by the small difference between solver run time and total run time for a similar number of user function calls as the proposed CSG based method when considering the system dimension.
It is to be expected that this advantage becomes more important with increasing number and complexity of the shapes, however there is still significant room for improvement in the implementation of the proposed CSG based method by using iterative methods to calculate \eqref{eq:proposedPointVessel} and \eqref{eq:proposedVesselOutline}.
It appears that the initial guess is slightly better in the case where the system shape is considered, as solver iterations (indicated by user function calls) and solution times do not reflect the increased complexity.
\addtolength{\textheight}{-10.7cm}   
\section{Conclusion}
A novel method to formulate the collision avoidance constraints for OCPs of mobile autonomous systems is proposed that combines the strengths of commonly used concepts by building on the CSG method.
For the two cases, neglecting system's shape as well as considering it, it is proven that the collision avoidance constraints may be implemented using a lower bound on the signed distance to individual obstacles, a key difference to existing CSG based formulations that do not make the connection to the signed distance and therefore struggle to consider the controlled system's shape.
Furthermore, it is demonstrated how the use of the $\lse()$-function yields smooth gradients while confining the approximation error to strict bounds.
Using a flatness based direct method comparative simulation results are reported for the proposed novel formulation as well as established methods based on ellipsoids, direct and indirect implementations of the signed distance collision avoidance constraints and the original CSG based formulation using a positive defining function.
To this end a realistic scenario based on the real world maritime test area in Kiel, Germany is addressed.
The versatility and efficiency of the novel concept is indicated for the point system case as well as for the case taking into account the system shape.
Current research is focused on validation of the simulation results with a small scale USV.
\bibliographystyle{IEEEtran}
\bibliography{obstacleModelling_lit}

\end{document}